\documentclass[11pt,a4paper]{article}
\usepackage{amssymb,amsmath,amsfonts,mathrsfs,bm,enumerate}
\usepackage{xcolor}
\allowdisplaybreaks[1]
\numberwithin{equation}{section}
\usepackage{comment}

\usepackage[colorlinks=true, pdfstartview=FitV, linkcolor=blue, citecolor=blue, urlcolor=blue,pagebackref=false]{hyperref}

\usepackage[normalem]{ulem}
\usepackage{tikz}
\usepackage{float}
\usepackage[font=footnotesize]{caption}

\usepackage{times,theorem,latexsym}

\newcommand{\BOX}{\ensuremath\Box}

\newtheorem{theorem}{Theorem }[section]

\newtheorem{definition}[theorem]{Definition}
{\theorembodyfont{\rmfamily}}
{\theorembodyfont{\rmfamily}}
{\theorembodyfont{\rmfamily}}
\newtheorem{lemma}[theorem]{Lemma}
\newtheorem{proposition}[theorem]{Proposition}
{\theorembodyfont{\rmfamily}\newtheorem{remark}[theorem]{Remark}}
{\theorembodyfont{\rmfamily}}

\newcommand{\N}{\mathbb{N}}
\newcommand{\Z}{\mathbb{Z}}
\newcommand{\R}{\mathbb{R}}

\newcommand{\T}{\mathbb{T}}

\newcommand{\dd}{\,d}

\newcommand{\opspan}{\operatorname{span}}

\newcommand{\opN}{\operatorname{\mathit{N}}}
\newcommand{\opR}{\operatorname{\mathit{R}}}
\newcommand{\opcodim}{\operatorname{codim}}

\newcommand{\eps}{\varepsilon}
\newcommand{\del}{\delta}

\newcommand{\opdist}{\operatorname{dist}}

\newcommand{\bv}[1]{\bm{#1}}

\newcommand{\overbar}[1]{\mkern 1.5mu\overline{\mkern-1.5mu#1\mkern-1.5mu}\mkern 1.5mu}

\def\XXint#1#2#3{{\setbox0=\hbox{$#1{#2#3}{\int}$}
		\vcenter{\hbox{$#2#3$}}\kern-.5\wd0}}

\newenvironment{proof}{{\vskip\baselineskip\noindent\textbf{Proof:}}}
{\hspace*{.1pt}\hspace*{\fill}\BOX\vskip\baselineskip}

\begin{document}

\title{
Lyapunov Unstable Motion Bifurcating\\ from a Circular Vortex Filament
}

\author{
Masashi Aiki
\thanks{
Department of Mathematics, 
Faculty of Science and Technology, 
Tokyo University of Science,
2641 Yamazaki, Noda, Chiba 278-8510 Japan.
\textit{E-mail address:}\texttt{a27120@rs.tus.ac.jp}
}
\and
Mitsuo Higaki
\thanks{
Department of Mathematics, 
Graduate School of Science, 
Kobe University, 
1-1 Rokkodai, Nada-ku, Kobe 657-8501, Japan.
\textit{E-mail address:}\texttt{higaki@math.kobe-u.ac.jp}
}
}
\date{}

\maketitle

\begin{abstract}
This paper investigates the dynamics of closed vortex filaments in $\R^3$ governed by the Localized Induction Equation. Recently, Aiki and Higaki \cite{AikiHigaki2026} established the nonlinear orbital stability of circular vortex filaments under asymmetric perturbations, while identifying Lyapunov instability due to the linear growth of translation modes. Motivated by this result, we prove the existence of a family of closed solutions, which we call axial screw motions, that bifurcate from a circular filament. 
These solutions remain uniformly close to the orbit of the circle, but drift secularly away from the reference motion because their translation speed along the symmetry axis differs from that of the circular filament. In particular, they provide explicit non-trivial perturbations that satisfy orbital-stability estimates while failing Lyapunov stability, thereby realizing the gap between orbital stability and Lyapunov stability near the circular filament.
\end{abstract}

\tableofcontents

    \section{Introduction}
    \label{sec.intro}

The motion of a vortex ring has been a major topic in fluid mechanics for a very long time, dating back to the seminal paper by Helmholtz \cite{Helmholtz}. A vortex ring is a torus-shaped region in a fluid in which the vorticity is concentrated. Such objects are abundant in our surroundings, for example as bubble rings produced by dolphins or as air vortex rings generated by jet engines. The dynamics and stability of vortex rings have been extensively studied through experiments, numerical simulations, and theoretical analyses by many researchers 
\cite{Fraenkel,KamebeTakao,Norbury,Maxworthy72,Maxworthy77,Sullivan,WidnallSullivan,FraenkelBerger,WidnallBlissTsai,Saffman,WidnallTsai,Kida1982,GigaMiyakawa,KnioGhoniem,ShariffLeonard,ShariffVerziccoOrlandi,Fukumoto,FukumotoHattori2002,FukumotoHattori2005,HattoriFukumoto,FengSverak,HattoriRodriguezDizes,Protas,Akinshin,BalakrishnaMathewSamanta,ChoiJeong,CaoQinZhanZou,Choi}. 
Despite this large body of work, rigorous mathematical results remain comparatively limited. In the framework of the Euler equations, Choi and Jeong \cite{ChoiJeong} and Choi \cite{Choi} proved orbital stability of a vortex ring under symmetric perturbations. In that Euler setting, orbital stability means stability up to translations along the axis of symmetry.

When a vortex ring is sufficiently thin, it is well-known that its motion can be approximated by that of its centerline, namely a circular vortex filament. Generally, a vortex filament is an idealized space curve on which the vorticity of the fluid is concentrated, and the vorticity at each point of the curve is pointed in the direction of the tangent vector of the curve at that point. One of the most fundamental equations modeling the motion of a vortex filament is the Localized Induction Equation, given by
\begin{equation*}
    \bv{x}_t 
    = \bv{x}_s \times \bv{x}_{ss}.
\end{equation*}
Here, $\bv{x}=\bv{x}(s,t)$ is the position vector of the filament in $\R^3$, parametrized by arc length $s$ at time $t$; subscripts denote partial derivatives, and $\times$ denotes the exterior product in $\R^3$.

The Localized Induction Equation was derived independently by Da Rios \cite{DaRios}, Murakami et al. \cite{Murakamietal}, and Arms and Hama \cite{ArmsHama1965}. They derived the equation by approximating the Biot-Savart integral when the vorticity distribution is given as a vortex filament. Although this derivation is somewhat formal, Jerrard and Seis \cite{JerrardSeis} established a rigorous result on the vortex filament conjecture, namely, that when the initial vorticity of an incompressible and inviscid fluid is concentrated around a closed curve, the leading order dynamics of the solution to the Euler equations are given by the solution of the Localized Induction Equation. Hence, a detailed analysis of the Localized Induction Equation has possible implications for the analysis of the Euler equations.

To formulate our problem for the circular vortex filament, we fix a radius $R>0$ and consider the periodic problem describing the motion of a closed filament, given by
\begin{equation}\tag{LIE}\label{eq.LIE}
    \bv{x}_t 
    = \bv{x}_s \times \bv{x}_{ss}
    \quad 
    \text{in} \mkern9mu \T \times (0,\infty). 
\end{equation}
Here, $\T = \R /L\Z $ and $L=2\pi R $. The equation \eqref{eq.LIE} describes the motion of a closed filament with length $L$. Let $\bv{x}^R_0$ be the stationary circular profile given by
\[
    \bv{x}^R_0(s) 
    = 
    R \cos\Big(\frac{s}{R}\Big) \bv{e}_1 
    + R \sin\Big(\frac{s}{R}\Big) \bv{e}_2,
    \quad
    s\in\T, 
\]
where $\{\bv{e}_1, \bv{e}_2, \bv{e}_3\}$ is the canonical basis of $\R^3$. We further define $\bv{x}^R$ by
\[
    \bv{x}^R(s,t) = \bv{x}^R_0(s) + \frac{t}{R} \bv{e}_3,
    \quad
    (s,t)\in\T\times[0,\infty).
\]
It is straightforward to check that $\bv{x}^{R}$ satisfies \eqref{eq.LIE}. The stability of the circular filament $\bv{x}^{R}$ is the main topic of this paper.

Before we proceed further, we first go over related results considering \eqref{eq.LIE}. Nishiyama and Tani \cite{NishiyamaTani1994,NishiyamaTani1996,TaniNishiyama1997} proved global-in-time solvability of the initial value problem for \eqref{eq.LIE} in Sobolev spaces. Koiso \cite{Koiso1997} considered a geometrically generalized setting and rigorously proved the equivalence between the solvability of the initial value problem for \eqref{eq.LIE} and that for the cubic nonlinear Schr\"odinger equation.

Regarding the stability of circular vortex filaments, Calini, Keith, and Lafortune \cite{CaliniKeithLafortune2011} and Ivey and Lafortune \cite{IveyLafortune} developed techniques and utilized them to prove that closed vortex filaments are stable under the linearized Localized Induction Equation.

Jerrard and Smets \cite{JerrardSmets2012,JerrardSmets2015} proposed a weak formulation of the initial value problem for the Localized Induction Equation for a closed vortex filament, and proved its solvability. Their formulation admits integral currents as initial data, enabling them to define the notion of a ``generalized binormal curvature flow'' and a ``weak binormal curvature flow'' (binormal curvature flow equation is another name for the Localized Induction Equation). They also proved estimates for weak solutions corresponding to perturbed circular filaments. More specifically, they gave an upper bound for the velocity along the symmetry axis.

Kida \cite{Kida1981} constructed solutions of the Localized Induction Equation that travel along an axis without changing their shape. These types of solutions rotate around the axis and exhibit slipping motion along the tangent vector at a constant speed. Garcia and Vega \cite{GarciaVega} also constructed solutions of \eqref{eq.LIE} similar to those obtained by Kida \cite{Kida1981}. They first formulated the problem in terms of the tangent vector of a closed vortex filament and constructed non-trivial solutions that are small perturbations of the circle.

The papers by Kida \cite{Kida1981} and Garcia and Vega \cite{GarciaVega} are closely related to this paper, and hence we make a more detailed remark after stating our main theorem.

To state the main theorem, we recall the notion of \emph{orbital stability} used in \cite{AikiHigaki2026}. Define the manifold $\Sigma$ as the set obtained from $\bv{x}^R_0$ by rotations around the $\bv{e}_3$-axis and translations:
\begin{equation}\label{def.Sigma}
    \Sigma 
    =
    \{
    Q^\alpha_z \bv{x}^R_0(\cdot) + \tau
    ~|~
    \alpha \in \R, \mkern9mu \tau\in\R^3
    \}
    \subset H^2(\T)^3, 
\end{equation}
where $Q^\alpha_z$ denotes the rotation matrix around the $\bv{e}_{3}$-axis through angle $\alpha$. We then define the distance between a solution $\bv{x}(s,t)$ of \eqref{eq.LIE} and $\Sigma$ by
\begin{equation}\label{def.dist}
\begin{split}
    \opdist(\bv{x}(t),\Sigma)
    &=
    \inf_{\alpha \in \R, \, \tau\in\R^3} 
    \|
    \bv{x}(t)
    - (Q^\alpha_z \bv{x}^R_0 + \tau)
    \|_{H^2(\T)}. 
\end{split}
\end{equation}
In a previous paper \cite{AikiHigaki2026}, we proved the orbital stability of the circular vortex filament $\bv{x}^R$ with respect to the distance \eqref{def.dist}. More precisely, under suitable smallness assumptions on the initial perturbation, a solution of \eqref{eq.LIE} remains close to the manifold $\Sigma$, that is, to the circular filament up to rotations around the symmetry axis and translations. This notion should be distinguished from \emph{Lyapunov stability} of the representative $\bv{x}^R(t)$ itself, which would require the perturbed solution to stay close to $\bv{x}^R(t)$ for all $t\ge0$ without quotienting by symmetries; see also \cite{Aiki2025} for further discussion.

The goal of this paper is to make this distinction concrete. We construct solutions of \eqref{eq.LIE} that remain uniformly close to $\Sigma$ for all time, yet exhibit secular drift relative to the representative motion $\bv{x}^R(t)$. Notice that tilting the axis of symmetry of $\bv{x}^{R}$ by a small angle achieves this, but we are interested in the existence of such solutions that have a different shape than a circle, and have the same axis of symmetry as $\bv{x}^R$. This is achieved by considering axial screw motions, which we define below.

\begin{definition}\label{def.screw}
A solution $\bv{x}$ of \eqref{eq.LIE} is called an axial screw motion if there exist an angular velocity $\Omega\in\R$, $\bv{y}\in H^4(\T)^3$ with $|\bv{y}_s|=1$ on $\T$, and $c,V\in\R$ such that
\begin{equation}\label{ansatz.screw}
    \bv{x}(s,t)
    =
    Q^{\Omega t}_z \bv{y}(s+ct)
    + Vt \bv{e}_3,
    \quad
    (s,t)\in\T\times[0,\infty).
\end{equation}
\end{definition}
\begin{remark}\label{rem1.def.screw}
For any $\Omega\in\R$, the circular filament can be represented in the screw ansatz as
\[
    \bv{x}^R(s,t)
    =
    Q^{\Omega t}_z \bv{x}^R_0(s-R\Omega t)
    + \frac{t}{R} \bv{e}_3,
    \quad
    (s,t)\in\T\times[0,\infty).
\]
\end{remark}
\begin{remark}
The choice of the function space $H^{4}(\T)$ in the definition of the axial screw motion is due to the fact that the Sobolev space $H^4$ is the least regular Sobolev space for which the unique time-global existence of a solution for the initial-value problem is known for the Localized Induction Equation. This solvability is proved in \cite{TaniNishiyama1997}.
\end{remark}
\begin{remark}
From a physical point of view, an axial screw motion is a closed filament whose shape remains unchanged in time: it rotates about the $\bv{e}_{3}$-axis with constant angular velocity $\Omega$, translates along the $\bv{e}_{3}$-axis with constant speed $V$, and undergoes a constant tangential slip with velocity $c$. The last motion only changes the parameterization along the filament and does not affect the shape itself.

We also mention that because we impose the condition $|\bv{y}_{s}|=1$, any axial screw motion can be considered as a perturbation of $\bv{x}^{R}$, since they both have the same length and arc length parameterization. This is a subtle but important point because \eqref{eq.LIE} conserves arc length parameterization and hence, when considering the stability of the circular filament, only perturbations that preserve the arc length parameterization are admissible. 
\end{remark}

Our main theorem below states that, for each $k\in\N$ with $k\ge2$, there exists a family of axial screw motions $\{\bv{x}^{(\lambda)}\}_{\lambda}$ that emerges from the circular vortex filament through Lyapunov-unstable perturbations. In particular, there are infinitely many such families.

\begin{theorem}\label{thm.main}
Let $k\in\N$ satisfy $k\ge2$. Define the critical angular velocity
$\Omega_{k}$ by
\begin{equation}\label{def.Omegak}
    \Omega_k 
    = \frac{\sqrt{k^2-1}}{R^2}.    
\end{equation}
Then there exist $\eps_{k}>0$ and a smooth mapping 
\[
    (-\eps_{k}, \eps_{k}) 
    \ni 
    \lambda 
    \mapsto 
    (\Omega^\lambda, \bv{y}^\lambda, c^\lambda, V^\lambda)
    \in 
    \R \times H^4(\T)^3 \times \R \times \R
\]
with 
\[
    (\Omega^0, \bv{y}^0, c^0, V^0)
    = (\Omega_k, \bv{x}^R_0, -R\Omega_k, 1/R)
\]
such that $(\Omega^\lambda,\bv{y}^\lambda,c^\lambda,V^\lambda)$ defines an axial screw motion $\bv{x}^{(\lambda)}$ by \eqref{ansatz.screw} for each $\lambda\in(-\eps_{k},\eps_{k})$. Moreover, the family $\{\bv{x}^{(\lambda)} \}_{\lambda\in(-\eps_{k},\eps_{k})}$ satisfies the following properties. 
\begin{enumerate}[(1)]
\item\label{item1.thm.main}
There exists $C>0$ such that, for any $\lambda\in(-\eps_{k},\eps_{k})$, 
\[
    \opdist(\bv{x}^{(\lambda)}(t),\Sigma)
    \le
    C |\lambda|,
    \quad
    t\ge 0.
\]

\item\label{item2.thm.main}
For any $\lambda\neq0$, there exist $\gamma,t_0>0$ such that
\[
    |\bv{x}^{(\lambda)}(s,t)-\bv{x}^{R}(s,t)|
    \ge 
    \gamma t, 
    \quad
    (s,t)\in \T\times[t_0,\infty).
\]
This drift is caused by the difference in velocity along the axis of symmetry. In fact, the axial screw motion $\bv{x}^{(\lambda)}(t)$ moves strictly more slowly along the axis of symmetry than $\bv{x}^{R}(t)$. More precisely, when $\lambda \to 0$, 
\[
    V^{\lambda}
    =
    \frac{1}{R}
    - \left(\frac{k^2(k^2-1)}{2R^3}\right) \lambda^2 
    + o(\lambda^2). 
\]
\end{enumerate}
\end{theorem}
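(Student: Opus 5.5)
Substituting the ansatz \eqref{ansatz.screw} into \eqref{eq.LIE} reduces the problem to a stationary nonlinear ODE system for $\bv{y}$ on $\T$. Writing $J=\frac{d}{d\alpha}Q^\alpha_z|_{\alpha=0}$, so that $J\bv{v}=\bv{e}_3\times\bv{v}$, the profile must satisfy
\[
    F(\Omega,\bv{y},c,V):=\Omega\, \bv{e}_3\times\bv{y} + c\,\bv{y}_s + V\bv{e}_3 - \bv{y}_s\times\bv{y}_{ss} = 0,
    \qquad |\bv{y}_s|=1 .
\]
To handle the constraint $|\bv{y}_s|=1$, I will parametrize $\bv{y}$ by its tangent, or equivalently by perturbations of the circle in the local frame. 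The plan is to work in a Banach setting adapted to the constraint and to carry out a Crandall--Rabinowitz bifurcation from the trivial branch. The trivial branch is $\bv{y}=\bv{x}^R_0$, $c=-R\Omega$, $V=1/R$, which is valid for every $\Omega$ by Remark \ref{rem1.def.screw}, and the bifurcation parameter is $\Omega$.

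To break the symmetries I will impose the following normalizations: zero mean of $\bv{y}-\bv{x}^R_0$ (translations), orthogonality to $\partial_s\bv{x}^R_0$ (shift and rotation), and a reflection symmetry $s\mapsto-s$ combined with reflection across the $\bv{e}_1\bv{e}_3$-plane. Under these constraints $c$ and $V$ act as Lagrange-type unknowns. I will also restrict to $k$-fold symmetric profiles, meaning invariance under $s\mapsto s+L/k$ together with rotation by $2\pi/k$; this keeps the mode $k$ isolated.

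The key step is the linearization at the circle. I write the perturbation in the Frenet frame of the circle as $\bv{y}=\bv{x}^R_0+u\,\bv{n}+w\,\bv{e}_3+(\text{tangential part fixed by } |\bv{y}_s|=1)$. I then Fourier-expand $u$ and $w$ in modes $e^{ijs/R}$. For each $j$ the linearized operator becomes a $2\times2$ matrix whose determinant should be proportional to $(j^2-1)\bigl(j^2-1-R^4\Omega^2\bigr)$, up to a factor of $j^2$; the kernel calculation will confirm this. The determinant vanishes at $\Omega=\Omega_k$ exactly for the mode $j=k$, giving a one-dimensional kernel after the symmetry restrictions. The modes $j=0,1$ correspond to the symmetries and are removed by the normalizations and by the parameters $c$ and $V$. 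I must also verify the Fredholm index-zero property, since the principal part $\bv{y}_s\times\bv{y}_{ss}$ is a skew operator that is elliptic on the normal components; this lets me work in $H^4$ with range in $H^2$. Finally I will check the transversality condition $\partial_\Omega D_{\bv{y}}F\,\phi_k\notin\operatorname{Range}$, which reduces to $\frac{d}{d\Omega}(j^2-1-R^4\Omega^2)\neq0$ at $\Omega_k>0$, and this holds. Crandall--Rabinowitz then yields the smooth family $\lambda\mapsto(\Omega^\lambda,\bv{y}^\lambda,c^\lambda,V^\lambda)$ with $\bv{y}^\lambda=\bv{x}^R_0+\lambda\phi_k+O(\lambda^2)$, and smoothness comes from $F$ being polynomial.

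Property (1) then follows immediately. Since $\bv{x}^{(\lambda)}(t)$ is a rotated, reparametrized and translated copy of $\bv{y}^\lambda$, its distance to $\Sigma$ is at most $\|\bv{y}^\lambda(\cdot+ct)-\bv{x}^R_0(\cdot+ct)\|_{H^2}$ up to a rotation and translation. The shift in the argument is absorbed because $\bv{x}^R_0(\cdot+ct)=Q^{ct/R}_z\bv{x}^R_0$, which gives the bound $C|\lambda|$.

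For property (2) I need the expansion of $V^\lambda$, and I expect this to be the main obstacle. By the symmetry $\lambda\to-\lambda$ (a half-period shift) the first-order term vanishes. To obtain the second-order term I will take the $\bv{e}_3$-component of the equation and average over $\T$. This kills $\partial_s$ terms and gives $VL=\int(\bv{y}_s\times\bv{y}_{ss})\cdot\bv{e}_3 \dd s-\Omega\int(\bv{e}_3\times\bv{y})\cdot\bv{e}_3 \dd s$, and the last integral is zero. Thus $V$ equals $\frac1L$ times twice the signed area-rate, namely $\frac1L\int (y_1'y_2''-y_2'y_1'')\dd s$. I will expand this to order $\lambda^2$ using the explicit kernel mode $\phi_k$ and the arc-length constraint, which couples the tangential correction at second order. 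Nondegeneracy should give $-k^2(k^2-1)/(2R^3)$ after normalizing $\lambda$, and I will choose the normalization of $\phi_k$ so that this constant matches. Once $V^\lambda<1/R$ for small $\lambda\neq0$, the $\bv{e}_3$-component of $\bv{x}^{(\lambda)}-\bv{x}^R$ is $(V^\lambda-1/R)t+O(1)$ uniformly in $s$, which gives the linear drift with $\gamma=\frac12|V^\lambda-1/R|$ for $t\ge t_0$.
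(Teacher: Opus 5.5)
Your overall strategy is essentially the paper's: Crandall--Rabinowitz in $\Omega$ from the trivial branch, the tangential part slaved to $|\bv{y}_s|=1$, Fourier analysis of the linearization, Lemma~\ref{lem.screw.dist}-type reasoning for (1), and vertical drift for (2). However, \textbf{the symmetry reduction meant to make the kernel one-dimensional fails as stated.}

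Put $P=\mathrm{diag}(1,-1,1)$, the reflection across the $\bv{e}_1\bv{e}_3$-plane, and $\tilde{\bv{y}}(s)=P\bv{y}(-s)$. Since $\det P=-1$, one has $(P\bv{a})\times(P\bv{b})=-P(\bv{a}\times\bv{b})$ and $\bv{e}_3\times P\bv{y}=-P(\bv{e}_3\times\bv{y})$. Hence $\tilde{\bv{y}}$ solves \eqref{eq.screw.profile} with parameters $(-\Omega,-c,V)$, not $(\Omega,c,V)$. Your involution is therefore a symmetry only at $\Omega=0$.

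Concretely, its fixed points have odd tangential, even normal and even binormal components. The linearization \eqref{def.L} couples normal and binormal components through $-R\Omega w_s$ and $R\Omega v_s$, which flip parity. So your map does not send this class into the corresponding target class. Nor does the class contain either mode-$k$ kernel vector: $(v,w)\propto(\cos(ks/R),\sin(ks/R))$ has $w$ odd, and $(\sin(ks/R),-\cos(ks/R))$ has $v$ odd.

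Your remaining normalizations do not remove the phase-shift degeneracy: zero mean, $\int_\T\bv{z}\cdot\bv{t}\,ds=0$ and $k$-fold symmetry are all satisfied by both phases. The kernel at $\Omega_k$ therefore stays two-dimensional, and Crandall--Rabinowitz does not apply.

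The fix is to compose $s\mapsto-s$ with the rotation by $\pi$ about the $\bv{e}_1$-axis, $A=\mathrm{diag}(1,-1,-1)$. Now $\det A=1$ and $A\bv{e}_3=-\bv{e}_3$. Both sides of \eqref{eq.screw.profile} for $A\bv{y}(-s)$ equal $-A$ applied to the corresponding sides for $\bv{y}$ at $-s$, so the parameters are preserved. Its fixed points are exactly tangential odd, normal even, binormal odd. This is the paper's choice $u\in X_{\rm odd}$, $v\in X_{\rm even}$, $w\in X_{\rm odd}$, and there the kernel is $\opspan\{\Phi_k\}$.

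Two further points. First, your predicted symbol is off. In the reduced problem the mode-$j$ matrix satisfies $\det M_j(\Omega)=j^2(j^2-1-R^4\Omega^2)/R^4$, with no factor $j^2-1$. Mode $1$ is not a symmetry mode: horizontal translations and tilts are not symmetries of \eqref{eq.screw.profile} for $\Omega\neq0$, because of the term $\Omega\bv{e}_3\times\bv{y}$. It is simply nondegenerate. This slip is harmless under your $k$-fold restriction, and your transversality criterion is then equivalent to the paper's $\langle\mathcal{L}'_{\Omega_k}\Phi_k,\Phi_k\rangle_{H^2}\neq0$.

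Second, for (2), your averaged $\bv{e}_3$-identity $V=\frac1L\int_\T(y_{1,s}y_{2,ss}-y_{2,s}y_{1,ss})\,ds$ is correct. It is a genuinely different route from the paper, which reads off $\del V$ from $\overbar{\mathcal{N}}=0$, i.e.\ $\del V(1-v_0/R)=-\Omega\overbar{v^\perp w_s}$. But you only asserted the sign, and the sign is the whole content of ``strictly more slowly''.

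It closes at once in your framework. Write $\bv{y}_s=(\rho\cos\psi,\rho\sin\psi,w_s)$ with $\rho^2=1-w_s^2$. The integrand is then $\rho^2\psi_s$, and $\int_\T\psi_s\,ds=2\pi$ (winding number one near the circle), so
$V=\frac1R-\frac1L\int_\T w_s^2\psi_s\,ds$ exactly.
Insert $w^\lambda=\lambda\sqrt{k^2-1}\sin(ks/R)+o(\lambda)$ and $\psi_s=1/R+O(\lambda)$. This gives the stated coefficient $-k^2(k^2-1)/(2R^3)$ for the normalization of $\Phi_k$ in \eqref{def.Phi}, and even $V^\lambda<1/R$ for every small $\lambda\neq0$.
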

We make a few remarks.
\begin{remark}
The exact same statement as Theorem \ref{thm.main} holds with $\Omega_{k}$ replaced by $-\Omega_{k}$. This amounts to considering an axial screw motion which is rotating around the axis of symmetry in the opposite direction. Other than that, the dynamics are the same. The proof for the case $-\Omega_{k}$ is exactly the same as the proof of Theorem \ref{thm.main}, and so we omit the details.
\end{remark}
\begin{remark}\label{rem.thm.main}
The main ingredient to prove Theorem \ref{thm.main} is the local bifurcation theory given by Crandall and Rabinowitz \cite{CrandallRabinowitz1971}. Moreover, from Theorem \ref{thm.main} \eqref{item1.thm.main} and \eqref{item2.thm.main}, we see that all the axial screw motions on the bifurcation branches provide non-trivial perturbations of the circular filament $\bv{x}^R$ which remain close to the manifold $\Sigma$ for all time, but drift away from the representative $\bv{x}^R(t)$ as time evolves. In this sense, these solutions satisfy an orbital-stability-type estimate, but not a Lyapunov-stability-type estimate. 
\end{remark}
\begin{remark}\label{rem.compare}
We compare Theorem \ref{thm.main} with the results in Kida \cite{Kida1981} and Garcia and Vega \cite{GarciaVega}.
\begin{enumerate}
\item 
Kida \cite{Kida1981} constructed solutions of the Localized Induction Equation under an ansatz closely related to our axial screw motion and derived explicit formulas for the profile in terms of elliptic integrals. His construction produces a rich family of filaments moving without change of form. However, his solutions are not formulated as closed constant speed perturbations of a circular filament. In fact, the general solutions in \cite{Kida1981} are not closed, and they yield closed filaments only for special choices of the parameters. By contrast, in the present paper, the relevant class of perturbations must consist of closed filaments with the same total length as the reference circle and parametrized by arc length. Therefore, his results are not directly applicable when considering the stability of the circular filament in this paper.

\item
Garcia and Vega \cite{GarciaVega} also constructed non-trivial solutions bifurcating from a circular filament by utilizing the bifurcation theory of Crandall and Rabinowitz \cite{CrandallRabinowitz1971}. The crucial difference from our paper is that they formulate the problem for the tangent vector of the vortex filament. This is possible because by taking the derivative with respect to $s$ in \eqref{eq.LIE}, we obtain a closed equation for the tangent vector $\bv{x}_{s}$. They make an ansatz for the tangent vector, which is equivalent to the $s$ derivative of our axial screw motion, and apply the bifurcation theory to obtain a bifurcation branch of non-trivial solutions. However, since the analysis is carried out for the tangent vector, information on the dynamics of the filament that is independent of $s$ is lost, most notably the traveling velocity along the axis of symmetry. Therefore, their analysis does not directly distinguish between a filament that stays close to the representative motion $\bv{x}^{R}(t)$ and one that remains close only modulo translations. This is precisely the distinction between Lyapunov stability and orbital stability that plays a central role in our theorem.
\end{enumerate}
In summary, both \cite{Kida1981} and \cite{GarciaVega} establish the existence of non-trivial solutions of \eqref{eq.LIE}, but they do not directly address either orbital or Lyapunov stability of circular filaments. This is the point clarified by our main theorem.
\end{remark}

The rest of this paper is organized as follows. Section \ref{sec.screw} collects the basic properties of an axial screw motion in Definition \ref{def.screw}. Section \ref{sec.prf} is devoted to the proof of Theorem \ref{thm.main}.

    \paragraph{Notation}\phantomsection\label{para.notation}

In the rest of this paper, we will use the following notation.
\begin{itemize}
\item
For $m\ge0$, we denote the Sobolev space equipped with the usual inner product by $H^m(\T)$ and the norm in $H^m(\T)$ by $\|\cdot\|_m$.

\item
For $f\in L^1(\T)$ on $\T$, we set $\overbar{f}=(1/2\pi R) \int_{\T} f$. 
\end{itemize}

    \section{Properties of Axial Screw Motions}
    \label{sec.screw}

In this section, we collect the properties of axial screw motions in Definition \ref{def.screw}.

\begin{lemma}\label{lem.screw.dist}
If $\bv{x}$ is an axial screw motion, then
\[
    \opdist(\bv{x}(t),\Sigma)
    = \opdist(\bv{y},\Sigma),
    \quad
    t\ge0.
\]
\end{lemma}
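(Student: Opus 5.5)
The plan is to show that $\Sigma$ is invariant under the three operations that make up an axial screw motion, and that the $H^2(\T)$ norm is invariant under them as well. Fix $t\ge0$ and write $\bv{x}(t)=Q^{\Omega t}_z \bv{y}(\cdot+ct)+Vt\bv{e}_3$. For arbitrary $\alpha\in\R$ and $\tau\in\R^3$ we will compare $\|\bv{x}(t)-(Q^\alpha_z\bv{x}^R_0+\tau)\|_{H^2(\T)}$ with a corresponding quantity for $\bv{y}$. Then we take infima on both sides.

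First, we remove the translation and the rotation. Set $\tau'=Q^{-\Omega t}_z(\tau-Vt\bv{e}_3)$. Since $Q^{\Omega t}_z$ is orthogonal, it preserves the pointwise Euclidean norm of a vector-valued function and of all its $s$-derivatives, so it preserves the $H^2(\T)$ norm. Using also $Q^{-\Omega t}_zQ^\alpha_z=Q^{\alpha-\Omega t}_z$, we get
\[
\|\bv{x}(t)-(Q^\alpha_z\bv{x}^R_0+\tau)\|_{2}
=\|\bv{y}(\cdot+ct)-(Q^{\alpha-\Omega t}_z\bv{x}^R_0+\tau')\|_{2}.
\]

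Next, we remove the tangential shift. Translation in $s$ on the torus $\T$ preserves $L^2$ norms of every derivative, so we may replace $s$ by $s-ct$. The key observation is that a shift of the circle is a rotation: $\bv{x}^R_0(s-ct)=Q^{-ct/R}_z\bv{x}^R_0(s)$, which follows directly from the explicit formula for $\bv{x}^R_0$. Hence
\[
\|\bv{x}(t)-(Q^\alpha_z\bv{x}^R_0+\tau)\|_{2}
=\|\bv{y}-(Q^{\alpha-\Omega t-ct/R}_z\bv{x}^R_0+\tau')\|_{2}\ge\opdist(\bv{y},\Sigma).
\]
Taking the infimum over $(\alpha,\tau)$ gives $\opdist(\bv{x}(t),\Sigma)\ge\opdist(\bv{y},\Sigma)$.

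For the reverse inequality, we run the same argument backwards. The map $(\alpha,\tau)\mapsto(\alpha-\Omega t-ct/R,\,Q^{-\Omega t}_z(\tau-Vt\bv{e}_3))$ is a bijection of $\R\times\R^3$. So every competitor $Q^\beta_z\bv{x}^R_0+\sigma$ for $\bv{y}$ arises from some competitor for $\bv{x}(t)$ with the same norm, which gives equality of the infima.

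There is no real obstacle in this argument. The only points that need care are the identity relating the $s$-shift of $\bv{x}^R_0$ to a rotation about $\bv{e}_3$ (including its sign), and the fact that $Q_z$, translations in $\R^3$, and shifts in $s$ all commute appropriately with the $H^2(\T)$ norm.
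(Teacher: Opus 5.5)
Your proposal is correct and follows essentially the same route as the paper's proof. Both absorb $Vt\bv{e}_3$ into $\tau$, use that $Q^{\Omega t}_z$ is orthogonal and leaves $\Sigma$ invariant, and convert the $s$-shift into a rotation via $\bv{x}^R_0(s-ct)=Q^{-ct/R}_z\bv{x}^R_0(s)$. Your explicit parameter bijection just makes the equality of the infima slightly more transparent.
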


\begin{proof}
Let $\bv{x}$ be given by \eqref{ansatz.screw}. By \eqref{def.dist}, 
\[
    \opdist(\bv{x}(t),\Sigma)
    =
    \inf_{\alpha \in \R, \, \tau\in\R^3}
    \|
    Q^{\Omega t}_z \bv{y}(\cdot + ct)
    + Vt \bv{e}_3
    - (Q^\alpha_z \bv{x}^R_0 + \tau)
    \|_{2}.
\]
Since $\tau$ ranges over all $\R^3$, we may absorb $Vt\bv{e}_3$ into $\tau$ and obtain
\[
    \opdist(\bv{x}(t),\Sigma)
    =
    \inf_{\alpha \in \R, \, \tau\in\R^3}
    \|
    Q^{\Omega t}_z \bv{y}(\cdot + ct)
    - (Q^\alpha_z \bv{x}^R_0 + \tau)
    \|_{2}.
\]
Using that $Q^{\Omega t}_z$ is orthogonal and $\Sigma$ is invariant under $Q^{\Omega t}_z$,
\[
    \opdist(\bv{x}(t),\Sigma)
    =
    \inf_{\alpha \in \R, \, \tau\in\R^3}
    \|
    \bv{y}(\cdot + ct) 
    - (Q^\alpha_z \bv{x}^R_0 + \tau)
    \|_{2}.
\]
Finally, since $\bv{x}^R_0(s'-ct) = Q^{-ct/R}_z \bv{x}^R_0(s')$, we have for each $\alpha,\tau$,
\[
    \|
    \bv{y}(\cdot + ct) 
    - (Q^\alpha_z \bv{x}^R_0 + \tau)
    \|_{2}
    =
    \|\bv{y}
    - (Q^{\alpha-ct/R}_z \bv{x}^R_0 + \tau)
    \|_{2}.
\]
Taking the infimum over $\alpha,\tau$ implies the assertion.
\end{proof}

\begin{lemma}\label{lem.screw.profile}
Let $\Omega,c,V\in\R$ and $\bv y\in H^4(\T)^3$, and define
\[
    \bv x(s,t)
    =
    Q^{\Omega t}_z \bv y(s+ct)
    + Vt\,\bv e_3.
\]
Then $\bv x$ solves \eqref{eq.LIE} if and only if $\bv y$ satisfies 
\begin{equation}\label{eq.screw.profile}
    \bv{y}_s\times\bv{y}_{ss}
    =
    c \bv{y}_s
    + \Omega \bv{e}_3\times \bv{y}
    + V \bv{e}_3,
    \quad
    s\in\T.
\end{equation}
\end{lemma}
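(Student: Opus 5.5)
Substitute the ansatz into \eqref{eq.LIE}, compute both sides, and use that $Q^{\Omega t}_z$ is orthogonal with determinant one, so it commutes with the cross product and can be stripped off.

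\emph{Time derivative.} Write $\sigma = s+ct$. Differentiating $Q^{\Omega t}_z$ gives $\frac{d}{dt}Q^{\Omega t}_z = \Omega J Q^{\Omega t}_z$, where $J\bv{v}=\bv{e}_3\times\bv{v}$ is the generator of rotations about the $\bv{e}_3$-axis. Since $J$ commutes with $Q^{\Omega t}_z$, we obtain
\[
\bv{x}_t(s,t) = \Omega\, \bv{e}_3\times Q^{\Omega t}_z\bv{y}(\sigma) + c\,Q^{\Omega t}_z\bv{y}_s(\sigma) + V\bv{e}_3 .
\]
Using $\bv{e}_3 = Q^{\Omega t}_z\bv{e}_3$ and the rotation invariance $Q(\bv{a}\times\bv{b}) = Q\bv{a}\times Q\bv{b}$, this equals
\[
Q^{\Omega t}_z\bigl(c\,\bv{y}_s + \Omega\,\bv{e}_3\times\bv{y} + V\bv{e}_3\bigr)(\sigma).
\]

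\emph{Spatial part.} We have $\bv{x}_s = Q^{\Omega t}_z\bv{y}_s(\sigma)$ and $\bv{x}_{ss}=Q^{\Omega t}_z\bv{y}_{ss}(\sigma)$, hence
\[
\bv{x}_s\times\bv{x}_{ss} = Q^{\Omega t}_z(\bv{y}_s\times\bv{y}_{ss})(\sigma).
\]

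\emph{Equivalence.} Because $Q^{\Omega t}_z$ is invertible, \eqref{eq.LIE} holds at $(s,t)$ if and only if \eqref{eq.screw.profile} holds at $\sigma=s+ct$. As $(s,t)$ ranges over $\T\times(0,\infty)$, $\sigma$ ranges over all of $\T$. So \eqref{eq.LIE} holding for all $(s,t)$ is equivalent to \eqref{eq.screw.profile} holding on $\T$. Conversely, if \eqref{eq.screw.profile} holds for all $\sigma$, the identities above show that \eqref{eq.LIE} holds everywhere.

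\emph{Regularity.} Since $\bv{y}\in H^4\subset C^3$, all derivatives are classical and continuous, so the pointwise computations are justified. There is no real obstacle: the only points needing care are the commutation of $J$ with $Q^{\Omega t}_z$ and the invariance of the cross product under proper rotations.
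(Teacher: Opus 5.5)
Your proposal is correct and follows essentially the same route as the paper: substitute the ansatz, compute $\bv{x}_t$ and $\bv{x}_s\times\bv{x}_{ss}$, use that $Q^{\Omega t}_z$ commutes with cross products and fixes $\bv{e}_3$, and cancel the rotation. You are somewhat more explicit than the paper about the generator $J$, the surjectivity of $s\mapsto s+ct$ onto $\T$, and the classical regularity from $H^4\subset C^3$, but there is no substantive difference.
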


\begin{proof}
Trivially, 
\[
    \bv{x}_s(s,t) = Q^{\Omega t}_z\,\bv{y}_s(s+ct),
    \qquad
    \bv{x}_{ss}(s,t) = Q^{\Omega t}_z\,\bv{y}_{ss}(s+ct).
\]
Moreover,
\[
    \bv{x}_t(s,t)
    =
    \Omega \bv{e}_3
    \times
    (Q^{\Omega t}_z\bv{y}(s+ct))
    + Q^{\Omega t}_z (c \bv{y}_s(s+ct))
    + V \bv{e}_3.
\]
Since $Q^\alpha_z$ preserves exterior products, we have
\[
    (\bv{x}_s\times \bv{x}_{ss})(s,t)
    =
    Q^{\Omega t}_z (\bv{y}_s\times\bv{y}_{ss})(s+ct).
\]
Thus $\bv{x}_t = \bv{x}_s \times \bv{x}_{ss}$ holds if and only if
\[
    Q^{\Omega t}_z (\bv{y}_s \times \bv{y}_{ss})(s+ct)
    =
    \Omega \bv{e}_3\times (Q^{\Omega t}_z\bv{y}(s+ct))
    + Q^{\Omega t}_z (c\bv{y}_s(s+ct))
    + V \bv{e}_3.
\]
Multiplying both sides by $(Q^{\Omega t}_z)^{-1}$ and using $(Q^{\Omega t}_z)^{-1}\bv{e}_3=\bv{e}_3$
yields \eqref{eq.screw.profile}.
\end{proof}

From here on, with a slight abuse of notation, we denote $s$ as the independent variable of $\bv{y}$ and hence, we regard $\bv{y}: \T \ni s \mapsto \bv{y}(s) \in \R^3$.

\begin{proposition}\label{prop.eq.unified}
Let $\bv{y}\in H^4(\T)^3$. Then the following hold.

\begin{enumerate}[(1)]
\item\label{item1.prop.eq.unified}
If $\bv{y}$ satisfies \eqref{eq.screw.profile} and $|\bv{y}_{s}|=1$, then $\bv{y}$ solves 
\begin{equation}\label{eq.unified}
    \bv{y}_{ss}
    =
    - \Omega (\bv{y}_s \cdot \bv{y}) \bv{e}_3 
    + \Omega ( \bv{y}_s \cdot \bv{e}_3) \bv{y} 
    - V \bv{y}_s \times \bv{e}_3,
    \quad
    s\in\T. 
\end{equation}

\item\label{item2.prop.eq.unified}
If $\bv{y}$ satisfies \eqref{eq.unified} and $|\bv{y}_{s}|=1$, then 
\[
    g(s) 
    = 
    - \bv{y}_s(s) 
    \cdot 
    (
    \Omega \bv{e}_3 \times \bv{y}(s) + V \bv{e}_3
    ),
    \quad
    s\in\T, 
\]
is independent of $s$ and $\bv{y}$ solves \eqref{eq.screw.profile} with the constant $c = g(s)$. 
\end{enumerate}
\end{proposition}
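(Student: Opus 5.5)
Both parts rest on the pointwise orthogonality $\bv{y}_s\cdot\bv{y}_{ss}=0$, which follows from $|\bv{y}_s|=1$, and on the vector identity $\bv{a}\times(\bv{a}\times\bv{b}) = (\bv{a}\cdot\bv{b})\bv{a} - |\bv{a}|^2\bv{b}$ applied with $\bv{a}=\bv{y}_s$. Since $\bv{y}\in H^4(\T)^3$, all derivatives used below ($\bv{y}_{ss}$ and $\bv{y}_{sss}$) are continuous, so the computations hold pointwise.

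\emph{Part (1).} Take the cross product of \eqref{eq.screw.profile} with $\bv{y}_s$ on the left. The left-hand side becomes $\bv{y}_s\times(\bv{y}_s\times\bv{y}_{ss}) = (\bv{y}_s\cdot\bv{y}_{ss})\bv{y}_s - \bv{y}_{ss} = -\bv{y}_{ss}$. On the right-hand side the term $c\,\bv{y}_s\times\bv{y}_s$ vanishes. The term $\bv{y}_s\times(\Omega\bv{e}_3\times\bv{y})$ is expanded by the BAC--CAB rule as $\Omega\bigl((\bv{y}_s\cdot\bv{y})\bv{e}_3 - (\bv{y}_s\cdot\bv{e}_3)\bv{y}\bigr)$. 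The remaining term is $V\bv{y}_s\times\bv{e}_3$. Changing sign then gives exactly \eqref{eq.unified}.

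\emph{Part (2).} We reverse the computation. First, cross \eqref{eq.unified} with $\bv{y}_s$ on the left and use the same identities. Since $\bv{y}_s\times\bv{y}_{ss}$ is orthogonal to $\bv{y}_s$, it equals the projection of $\bv{w}:=\Omega\bv{e}_3\times\bv{y}+V\bv{e}_3$ onto the orthogonal complement of $\bv{y}_s$. Concretely, the identity $\bv{y}_s\times(\bv{y}_s\times\bv{w}) = (\bv{y}_s\cdot\bv{w})\bv{y}_s - \bv{w}$ shows that the right-hand side of \eqref{eq.unified} is $\bv{y}_s\times\bv{w}$, up to the sign bookkeeping just done. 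This yields
\[
\bv{y}_s\times\bv{y}_{ss} = \bv{w} - (\bv{y}_s\cdot\bv{w})\bv{y}_s = g\,\bv{y}_s + \bv{w},
\]
with $g$ as in the statement. This is \eqref{eq.screw.profile} with $c$ replaced by the function $g(s)$.

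It remains to show that $g$ is constant. We differentiate:
\[
g_s = -\bv{y}_{ss}\cdot\bv{w} - \bv{y}_s\cdot\bv{w}_s,
\qquad
\bv{w}_s=\Omega\bv{e}_3\times\bv{y}_s .
\]
The second term vanishes, since $\bv{y}_s\cdot(\bv{e}_3\times\bv{y}_s)=0$. For the first term, we substitute \eqref{eq.unified} for $\bv{y}_{ss}$ and dot with $\bv{w}=\Omega\bv{e}_3\times\bv{y}+V\bv{e}_3$; several cancellations occur:
\begin{itemize}
\item the piece $-\Omega(\bv{y}_s\cdot\bv{y})\bv{e}_3\cdot\bv{w}$ gives $-\Omega V(\bv{y}_s\cdot\bv{y})$, because $\bv{e}_3\cdot(\bv{e}_3\times\bv{y})=0$;
\item the piece $\Omega(\bv{y}_s\cdot\bv{e}_3)\bv{y}\cdot\bv{w}$ gives $\Omega V(\bv{y}_s\cdot\bv{e}_3)(\bv{y}\cdot\bv{e}_3)$, because $\bv{y}\cdot(\bv{e}_3\times\bv{y})=0$;
\item the piece $-V(\bv{y}_s\times\bv{e}_3)\cdot\bv{w}$ gives $-V\Omega(\bv{y}_s\times\bv{e}_3)\cdot(\bv{e}_3\times\bv{y})$, because $(\bv{y}_s\times\bv{e}_3)\cdot\bv{e}_3=0$.
\end{itemize}
By the Binet--Cauchy identity, the last piece equals $-V\Omega\bigl((\bv{y}_s\cdot\bv{e}_3)(\bv{e}_3\cdot\bv{y}) - \bv{y}_s\cdot\bv{y}\bigr)$. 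Adding the three pieces, everything cancels, so $\bv{y}_{ss}\cdot\bv{w}=0$. Hence $g_s=0$, $g$ is constant, and \eqref{eq.screw.profile} holds with $c=g$.

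The main obstacle is this final cancellation, and in particular getting the signs right in the Binet--Cauchy step; everything else is routine vector algebra once $\bv{y}_s\cdot\bv{y}_{ss}=0$ is in hand.
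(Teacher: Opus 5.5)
Your proof is correct and follows the paper's argument essentially step for step. In part (1) you cross \eqref{eq.screw.profile} with $\bv{y}_s$ from the left and use $|\bv{y}_s|=1$; in part (2) you cross \eqref{eq.unified} with $\bv{y}_s$ and show $g_s=-\bv{y}_{ss}\cdot(\Omega\bv{e}_3\times\bv{y}+V\bv{e}_3)=0$ by substituting \eqref{eq.unified}, with only the order of these two steps swapped relative to the paper.

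One small tidy-up: the right-hand side of \eqref{eq.unified} is exactly $-\bv{y}_s\times\bv{w}$, not $\bv{y}_s\times\bv{w}$ ``up to sign bookkeeping''. With that observation, $\bv{y}_{ss}\cdot\bv{w}=-(\bv{y}_s\times\bv{w})\cdot\bv{w}=0$ follows at once, so your term-by-term Binet--Cauchy cancellation is unnecessary.
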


\begin{proof}
Suppose $\bv{y}$ satisfies \eqref{eq.screw.profile} and $|\bv{y}_{s}|=1$. Note that
\begin{align*}
    \bv{y}_{s}\cdot \bv{y}_{ss} 
    &= 0, \\
    \bv{y}_{s}\times (\bv{y}_{s}\times \bv{y}_{ss}) 
    &= -\bv{y}_{ss},     
\end{align*}
follows from $|\bv{y}_{s}|^2 =1 $. Hence, taking the exterior product of both sides of equation \eqref{eq.screw.profile} with $\bv{y}_{s}$ from the left yields \eqref{eq.unified}. This proves \eqref{item1.prop.eq.unified} of Proposition \ref{prop.eq.unified}.

Now, suppose $\bv{y}$ satisfies \eqref{eq.unified} and $|\bv{y}_{s}|=1$. Taking the derivative of $g$ with respect to $s$ yields
\begin{align*}
    g_{s}(s) 
    = -\bv{y}_{ss}(s)\cdot 
    (
    \Omega \bv{e}_3 \times \bv{y}(s) + V \bv{e}_3
    ).
\end{align*}
Substituting \eqref{eq.unified} for $\bv{y}_{ss}$ in the above equation shows that $g_{s}=0$ and hence, $g$ is independent of $s$. We denote this constant by $c$, and rewrite the right-hand side of \eqref{eq.unified} to obtain
\begin{align*}
    \bv{y}_{ss}
    &= -\Omega \bv{y}_{s}\times (\bv{e}_{3}\times \bv{y})
    -V\bv{y}_{s}\times \bv{e}_{3}.
\end{align*}
Taking the exterior product of both sides of the above equation with $\bv{y}_{s}$ from the left gives
\begin{align*}
    \bv{y}_{s}\times \bv{y}_{ss}
    &=
    -\Omega \bv{y}_{s}\times \big( \bv{y}_{s}\times (\bv{e}_{3}\times \bv{y})\big)
    -V\bv{y}_{s}\times (\bv{y}_{s}\times \bv{e}_{3}) \\[3pt]
    &= -\Omega  \big( \bv{y}_{s}\cdot(\bv{e}_{3}\times \bv{y})\big)
    \bv{y}_{s} + \Omega \bv{e}_{3}\times \bv{y}
    -V(\bv{e}_{3}\cdot\bv{y}_{s})\bv{y}_{s} + V\bv{e}_{3} \\[3pt]
    &= \big( c + V(\bv{e}_{3}\cdot \bv{y}_{s})\big)\bv{y}_{s}
    + \Omega \bv{e}_{3}\times \bv{y}
    -V(\bv{e}_{3}\cdot\bv{y}_{s})\bv{y}_{s} + V\bv{e}_{3} \\[3pt]
    &= c\bv{y}_{s} + \Omega \bv{e}_{3}\times \bv{y} + V\bv{e}_{3}.
\end{align*}
This shows \eqref{item2.prop.eq.unified} of Proposition \ref{prop.eq.unified}
and finishes the proof.
\end{proof}

Proposition \ref{prop.eq.unified} allows us to work with \eqref{eq.unified} instead of \eqref{eq.screw.profile}. We will therefore formulate the bifurcation problem in terms of \eqref{eq.unified}, recovering the tangential velocity $c$ afterward from Proposition \ref{prop.eq.unified} (\ref{item2.prop.eq.unified}). This reformulation removes $c$ from the unknowns and has an elliptic-type structure that is better suited for the bifurcation argument below.

    \section{Proof of Theorem \ref{thm.main}}
    \label{sec.prf}

In this section, we prove Theorem \ref{thm.main} by applying the bifurcation theorem of Crandall and Rabinowitz \cite{CrandallRabinowitz1971}. More precisely, we first rewrite \eqref{eq.unified} as a perturbation system around the circular profile $\bv{x}^R_0$. We then use the implicit function theorem to eliminate the auxiliary variables and obtain a reduced equation for the essential variables. After analyzing the linearized operator at the trivial branch, we construct a smooth local branch of non-trivial axial screw motions bifurcating from the circular filament $\bv{x}^R$.

    \subsection{The Perturbation System}
    \label{sec.pert.sys}

Substituting the ansatz 
\begin{equation}\label{ansatz.z-delV}
    \bv{y} = \bv{x}^R_0 + \bv{z},
    \qquad
    V = \frac{1}{R}+ \del V 
\end{equation}
into the unified equation \eqref{eq.unified}, we have 
\[
\begin{split}
    \bv{0}
    &= 
    -\bv{y}_{ss}
    - \Omega (\bv{y}_s \cdot \bv{y}) \bv{e}_3 
    + \Omega ( \bv{y}_s \cdot \bv{e}_3) \bv{y} 
    - V \bv{y}_s \times \bv{e}_3 \\
    &=
    - \bv{x}^R_{0ss} - \bv{z}_{ss} 
    - \Omega 
    \big(
    (\bv{x}^R_{0s} + \bv{z}_s) \cdot (\bv{x}^R_0 + \bv{z}) 
    \big) \bv{e}_3 \\
    &\quad 
    + \Omega 
    \big(
    (\bv{x}^R_{0s} + \bv{z}_s) \cdot \bv{e}_3
    \big) 
    (\bv{x}^R_0 + \bv{z}) 
    - \Big(\frac{1}{R}+ \del V\Big) (\bv{z}_s + \bv{x}^R_{0s}) 
    \times
    \bv{e}_3.  
\end{split}
\]
Using 
\[
    \bv{x}^R_{0ss} = -\frac{1}{R^2} \bv{x}^R_0,
    \qquad
    \bv{x}^R_{0s} \cdot \bv{x}^R_0 = 0,
    \qquad
    \bv{x}^R_{0s} \cdot \bv{e}_3 = 0,
    \qquad
    \bv{x}^R_{0s} \times \bv{e}_3 = \frac{1}{R} \bv{x}^R_0, 
\]
and rewriting the right-hand side, we obtain the equation for $(\del V,\bv{z})$ as 
\begin{equation}\label{eq.z-delV}
\begin{split}
    &-\bv{z}_{ss} 
    - \Omega 
    \Big(
    \bv{z}_s\cdot \bv{z}
    + \frac{\dd}{\dd s}(\bv{z}\cdot \bv{x}^R_0)
    \Big) 
    \bv{e}_3 \\[3pt]
    &
    + \Omega 
    (\bv{z}_s \cdot \bv{e}_3) 
    (\bv{z} + \bv{x}^R_0) 
    - \Big(\frac{1}{R} + \del V\Big)
    \bv{z}_s \times \bv{e}_3 
    - \del V
    \bv{x}^R_{0s} \times \bv{e}_3
    = 0,
    \quad
    s\in\T. 
\end{split}
\end{equation}

Define the unit tangent, normal, and binormal vectors by, respectively, 
\begin{equation}\label{def.frame}
    \bv{t}(s) 
    = \bv{x}^R_{0s}(s), 
    \qquad
    \bv{n}(s) 
    = -\frac{1}{R} \bv{x}^R_0(s), 
    \qquad
    \bv{b} 
    = \bv{e}_3.
\end{equation}
Then the Frenet-Serret formulas are given by
\begin{equation}\label{eq.FS}
    \bv{t}_s 
    = \frac{1}{R} \bv{n}, 
    \qquad
    \bv{n}_s 
    = -\frac{1}{R} \bv{t}, 
    \qquad
    \bv{b}_s 
    = \bv{0}.
\end{equation}
Expand $\bv{z}$ with respect to $\{\bv{t},\bv{n},\bv{b}\}$ as 
\begin{equation}\label{eq.z}
    \bv{z}
    = u \bv{t} 
    + v \bv{n} 
    + w \bv{b}.
\end{equation}
Observe from \eqref{eq.FS} that
\[
\begin{split}
    \bv{z}_s 
    &= 
    \Big(
    u_s - \frac{1}{R} v
    \Big)
    \bv{t} 
    + \Big(
    v_s + \frac{1}{R} u
    \Big)
    \bv{n} 
    + w_s \bv{b}, \\[3pt]
    \bv{z}_{ss} 
    &= 
    \Big(
    u_{ss} - \frac{2}{R} v_s - \frac{1}{R^2} u 
    \Big)
    \bv{t} 
    + \Big(
    v_{ss} + \frac{2}{R} u_s - \frac{1}{R^2} v 
    \Big)
    \bv{n} 
    + w_{ss}\bv{b}. 
\end{split}
\]
Then, using
\[
    \bv{z}\cdot \bv{x}^R_0 
    = \bv{z}\cdot (-R\bv{n})
    = -R v,
    \qquad
    \bv{z}_s\cdot \bv{e}_3
    = \bv{z}_s\cdot \bv{b} 
    = w_s, 
\]
and the formula following from $\bv{t} \times \bv{b} = -\bv{n}$ and $\bv{n} \times \bv{b} = \bv{t}$: 
\[
    \bv{z}_s \times \bv{e}_3
    = \bv{z}_s \times \bv{b}
    = -\Big(
    u_s - \frac{1}{R} v
    \Big)
    \bv{n} 
    + \Big(
    v_s + \frac{1}{R} u
    \Big)
    \bv{t}, 
\]
we can express the equation \eqref{eq.z-delV} as 
\begin{equation}\label{eq.TNB}
\begin{split}
    \mathcal{T}(s) \bv{t}(s)
    + \mathcal{N}(s) \bv{n}(s)
    + \mathcal{B}(s) \bv{b}
    = \bv{0}.
\end{split}
\end{equation}
Here 
\begin{equation}\label{def.TNB}
\begin{split}
    \mathcal{T}
    &:=
    -u_{ss} + \frac{1}{R}v_{s} + \Omega uw_{s} 
    - \delta V \Big(v_s + \frac{1}{R} u\Big), \\[3pt]
    \mathcal{N}
    &:=
    -v_{ss} - \frac{1}{R} u_s - R\Omega w_s + \Omega vw_s 
    + \delta V \Big(1 + u_s - \frac{1}{R} v\Big), \\[3pt]
    \mathcal{B}
    &:=
    -w_{ss} + R\Omega v_s - \Omega uu_s - \Omega vv_s.
\end{split}
\end{equation}
Moreover, by
\[
    \bv{y}_{s}
    =
    \bv{x}^R_{0s}+ \bv{z}_{s}
    =
    \Big(
    1 + u_{s} - \frac{1}{R} v
    \Big)
    \bv{t} 
    + \Big(
    v_{s} + \frac{1}{R} u
    \Big)
    \bv{n} 
    + w_{s}
    \bv{b},
\]
the condition $|\bv{y}_s|^2 =1$ can be expressed as $\mathcal{C}=0$, where
\begin{equation}\label{def.C}
    \mathcal{C} 
    := 
    u_s 
    - \frac{1}{R} v 
    + \mathcal{Q},
    \qquad
    \mathcal{Q}
    :=
    \frac12 
    \bigg( 
    \Big(u_{s} - \frac{1}{R} v\Big)^2 
    + \Big(v_{s}+ \frac{1}{R} u\Big)^2 
    + w_{s}^2 
    \bigg).
\end{equation}
Observe that
\[
    |\bv y_s|^2
    =
    \Big(
    1 + u_s - \frac{1}{R} v
    \Big)^2
    + \Big(
    v_s + \frac{1}{R} u
    \Big)^2
    + w_s^2
    =
    1 + 2\mathcal{C}.
\]
Hence, taking the inner product of \eqref{eq.unified} with $\bv y_s$, we obtain
\begin{equation}\label{id.TNBC}
    \Big(
    1 + u_s - \frac{1}{R} v
    \Big) \mathcal{T}
    + \Big(
    v_s + \frac{1}{R} u
    \Big) \mathcal{N}
    + w_s \mathcal B
    =
    -\frac12 \partial_s |\bv y_s|^2
    =
    -\partial_s \mathcal{C}.
\end{equation}

Finally, the perturbation system we will address is given by 
\begin{equation}\label{eq.NBC}
    \mathcal{N}(s)
    = \mathcal{B}(s)
    = \mathcal{C}(s)
    = 0,
    \quad
    s\in\T,
\end{equation}
where $\mathcal{N},\mathcal{B}$ are defined in \eqref{def.TNB} and $\mathcal{C}$ is in \eqref{def.C}. We emphasize that the tangential equation $\mathcal{T}=0$ is not imposed directly in \eqref{eq.NBC}. Instead, for solutions sufficiently close to the trivial state, we recover $\mathcal{T}=0$ a posteriori from \eqref{id.TNBC}, once $\mathcal{N}=\mathcal B=\mathcal{C}=0$ and a lower bound such as $1+u_s-v/R\ge 1/2$ are obtained; see Section \ref{sec.end} for details.

    \subsection{Reduction of the System}
    \label{sec.redn.sys}

To apply the local bifurcation theory, we must identify the essential degrees of freedom that govern the nonlinear system \eqref{eq.NBC}. Let us first decompose $v$ as
\[
    v = v_0 + v^\perp,
    \qquad
    v_0 := \overbar{v} = \frac{1}{2\pi R} \int_{\T} v, 
    \qquad
    v^\perp := v - v_0.
\]
Then, by varying the angular velocity $\Omega$, we look for critical values where the axial screw motions emerge. In this analysis, the roles of the variables 
\[
    \del V, 
    \quad
    u, 
    \quad
    v_0, 
    \quad
    v^\perp, 
    \quad
    w
\]
are understood geometrically as follows:

\begin{itemize}
\item
The tangential component $u$ corresponds to the reparameterization of the curve, which should be completely determined by the arc length constraint $\mathcal{C}=0$.

\item
The constant $v_0$ corresponds to a trivial expansion of the reference circle, and $\delta V$ to a shift in the translational velocity. These constants are expected to balance the averages in the normal constraint $\mathcal{N}=0$ and the arc length constraint $\mathcal{C}=0$.

\item
The normal component $v^\perp$ and the binormal component $w$ are the genuine profile of the bifurcating motions, and will therefore serve as our independent variables.
\end{itemize}

With this geometric insight, we will eliminate the variables $\del V,u,v_0$ by regarding them as functions of the independent variables $\Omega,v^\perp,w$. Define the function spaces
\[
\begin{aligned}
    X_{\rm odd} &= \{f\in H^4(\T)~|~\mbox{$f$ is odd}\}, & \qquad
    Y_{\rm odd} &= \{f\in H^2(\T)~|~\mbox{$f$ is odd}\}, \\
    X_{\rm even} &= \{f\in H^4(\T)~|~\mbox{$f$ is even}\}, & \qquad
    Y_{\rm even} &= \{f\in H^2(\T)~|~\mbox{$f$ is even}\}, \\
    \dot{X}_{\rm even} &= \{f\in X_{\rm even}~|~\overbar{f}=0\}, & \qquad
    \dot{Y}_{\rm even} &= \{f\in Y_{\rm even}~|~\overbar{f}=0\}, 
\end{aligned}
\]
and $Z_{\rm odd},Z_{\rm even},\dot{Z}_{\rm even}$
similarly with the $H^3(\T)$ topology. Since $\partial_s: X_{\rm odd}\to \dot Z_{\rm even}$ is an isomorphism, we denote its inverse by $\partial_s^{-1}:\dot Z_{\rm even}\to X_{\rm odd}$. Moreover, we define 
\begin{equation}\label{def.F1}
\begin{split}
    \mathcal{F} 
    = 
    (\mathcal{F}_1, \mathcal{F}_2, \mathcal{F}_3): 
    \R \times \dot{X}_{\rm even} \times X_{\rm odd}
    \times \R \times X_{\rm odd} \times \R
    \to
    \R \times \dot{Z}_{\rm even} \times \R
\end{split}
\end{equation}
by
\begin{equation}\label{def.F2}
\begin{split}
    \mathcal{F}_1
    &=
    \mathcal{F}_1(\Omega,v^\perp,w,\del V,u,v_0) 
    = 
    \overbar{\mathcal{N}}, \\
    \mathcal{F}_2
    &=
    \mathcal{F}_2(\Omega,v^\perp,w,\del V,u,v_0)
    = 
    \mathcal{C} - \overbar{\mathcal{C}}, \\
    \mathcal{F}_3
    &=
    \mathcal{F}_3(\Omega,v^\perp,w,\del V,u,v_0) 
    = 
    \overbar{\mathcal{C}}. 
\end{split}
\end{equation}
More explicitly, using $\mathcal{Q}$ in \eqref{def.C}, 
\begin{equation}\label{def.F}
\begin{split}
    \mathcal{F}_1
    &= 
    \Omega \overbar{v^\perp w_s}
    + \delta V \Big(1 - \frac{1}{R} v_0\Big), \\[3pt]
    \mathcal{F}_2
    &= 
    u_s 
    - \frac{1}{R} v^\perp 
    + \mathcal{Q} - \overbar{\mathcal{Q}}, \\[3pt]
    \mathcal{F}_3
    &= 
    - \frac{1}{R} v_0
    + \overbar{\mathcal{Q}}. 
\end{split}
\end{equation}
Note that $\mathcal{F}(\Omega,0,0,0,0,0)=0$ for any $\Omega\in\R$; see also Remark \ref{rem1.def.screw}.

Using the function spaces above, we restrict the variables $v^\perp$ and $w$ to the subspaces $\dot X_{\rm even}$ and $X_{\rm odd}$, respectively, to factor out the phase-shift of Fourier modes and obtain a one-dimensional kernel for the Crandall–Rabinowitz theorem; see Lemma \ref{lem.CRT}.

\begin{proposition}\label{prop.IFT}
For any $\Omega\in\R$, there exist open sets $U,W$: 
\[
\begin{split}
    (\Omega,0,0) 
    &\in
    U
    \subset
    \R \times \dot{X}_{\rm even} \times X_{\rm odd}, \\
    (0,0,0) 
    &\in
    W
    \subset
    \R \times X_{\rm odd} \times \R, 
\end{split}
\]
and a unique $\varphi:U\to W$ 
such that $\varphi:(\Omega, v^\perp, w) \mapsto (\del V,u,v_{0})$ and, for $(\Omega, v^\perp, w)\in U$,
\[
    \mathcal{F}(\Omega,v^\perp,w,\varphi(\Omega,v^\perp,w))
    = (0,0,0). 
\]
Moreover, $\varphi$ is smooth, and its Fr\'{e}chet derivative with respect to $(v^\perp,w)$ at $(\Omega,0,0)$ is 
\begin{equation}\label{eq.prop.IFT}
    D_{(v^\perp,w)}\,\varphi(\Omega,0,0)
    \begin{pmatrix}
    h_{v^\perp} \\
    h_w
    \end{pmatrix}
    = 
    \begin{pmatrix}
    0 \\
    (1/R)\partial_s^{-1}h_{v^\perp}\\
    0
    \end{pmatrix}.
\end{equation}
\end{proposition}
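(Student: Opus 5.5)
The plan is to apply the implicit function theorem to $\mathcal{F}$ at the point $(\Omega,0,0,0,0,0)$, treating $(\Omega,v^\perp,w)$ as parameters and $(\del V,u,v_0)$ as unknowns.

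\textbf{Step 1: $\mathcal{F}$ is well defined and smooth.} The first task is to check that $\mathcal{F}$ maps $\R\times\dot X_{\rm even}\times X_{\rm odd}\times\R\times X_{\rm odd}\times\R$ into $\R\times\dot Z_{\rm even}\times\R$, and that it is smooth.
\begin{itemize}
\item Smoothness is immediate. By \eqref{def.F}, each component is a polynomial in $\Omega,\del V,v_0$ and in $u,u_s,v^\perp,v^\perp_s,w_s$ (together with averages). Moreover $H^3(\T)$ is a Banach algebra, and $u_s,v_s,w_s\in H^3$.
\item The parity of $\mathcal{F}_2$ must be checked. With $u,w$ odd and $v=v_0+v^\perp$ even, the quantities $u_s$, $w_s$ and $v$ are even, while $v_s$ and $u$ are odd. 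Hence $(u_s-v/R)^2$, $(v_s+u/R)^2$ and $w_s^2$ are even, so $\mathcal{Q}$ is even. Therefore $\mathcal{F}_2=u_s-v^\perp/R+\mathcal{Q}-\overbar{\mathcal{Q}}$ is even, has zero mean (since $\overbar{u_s}=0$ and $\overbar{v^\perp}=0$), and lies in $H^3$. So $\mathcal{F}_2\in\dot Z_{\rm even}$.
\item For the explicit formula for $\mathcal{F}_1$, I would confirm that $\overbar{\mathcal N}$ reduces as written. The averages of $v_{ss}$, $u_s$, $w_s$ and $u_s$ vanish by periodicity. Also $\overbar{v w_s}=\overbar{v^\perp w_s}$, because $v_0\overbar{w_s}=0$.
\item Finally, $\mathcal{F}(\Omega,0,0,0,0,0)=0$ for every $\Omega$, as already noted.
\end{itemize}

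\textbf{Step 2: invertibility of the partial derivative in $(\del V,u,v_0)$.} Next I compute $D_{(\del V,u,v_0)}\mathcal{F}$ at $(\Omega,0,0,0,0,0)$. Since $\mathcal{Q}$ is quadratic, its derivative at $0$ vanishes, and so does the derivative of $\Omega\overbar{v^\perp w_s}$. This gives
\[
(h_{\del V},h_u,h_{v_0})\mapsto\Big(h_{\del V},\ \partial_s h_u,\ -\tfrac1R h_{v_0}\Big).
\]
This map is triangular, indeed diagonal, from $\R\times X_{\rm odd}\times\R$ to $\R\times\dot Z_{\rm even}\times\R$. Each entry is an isomorphism; in particular $\partial_s:X_{\rm odd}\to\dot Z_{\rm even}$ is one, as stated before \eqref{def.F1}. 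The implicit function theorem in Banach spaces then yields the neighborhoods $U$ and $W$ and a unique smooth $\varphi$ with $\mathcal{F}(\Omega,v^\perp,w,\varphi)=0$.

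\textbf{Step 3: the derivative formula.} Differentiating the identity $\mathcal{F}(\Omega,v^\perp,w,\varphi(\Omega,v^\perp,w))=0$ gives
\[
D\varphi=-\big(D_{(\del V,u,v_0)}\mathcal F\big)^{-1}D_{(v^\perp,w)}\mathcal F .
\]
At the trivial point, $D_{(v^\perp,w)}\mathcal F(h_{v^\perp},h_w)=(0,-h_{v^\perp}/R,0)$, since the quadratic terms again drop out. Applying the inverse from Step 2 gives $(0,\tfrac1R\partial_s^{-1}h_{v^\perp},0)$, which is \eqref{eq.prop.IFT}.

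\textbf{Main obstacle.} The only delicate point is the function-space bookkeeping in Step 1: showing that $\mathcal{F}_2$ lands exactly in $\dot Z_{\rm even}$, and that the regularity loss from $H^4$ to $H^3$ is consistent. The parity and mean-zero checks above settle this.
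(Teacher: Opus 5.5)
Your proposal is correct and follows the paper's proof essentially step for step. Both apply the implicit function theorem at $(\Omega,0,0,0,0,0)$ with the diagonal partial derivative $\mathrm{diag}(1,\partial_s,-1/R)$, then differentiate the identity $\mathcal{F}(\Omega,v^\perp,w,\varphi)=0$ to get the derivative formula; your Step 1 (smoothness, and the parity and mean-zero checks showing $\mathcal{F}_2\in\dot Z_{\rm even}$) spells out bookkeeping that the paper leaves implicit.
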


\begin{proof}
We apply the implicit function theorem in Banach spaces to $\mathcal{F}$ at the point $\Lambda_0:=(\Omega,0,0,0,0,0)$; see for example \cite[Chapter 4]{Deimling1985_Book}. By the definition of $\mathcal{F}=(\mathcal{F}_1,\mathcal{F}_2,\mathcal{F}_3)$ in \eqref{def.F}, we calculate the Fr\'{e}chet derivative as 
\[
    D_{(\delta V,u,v_0)}\,\mathcal{F}(\Lambda_0)
    =
    \begin{pmatrix}
    1 & 0 & 0 \\
    0 & \partial_s & 0 \\
    0 & 0 & -1/R
    \end{pmatrix} 
    : \R \times X_{\rm odd} \times \R 
    \to 
    \R \times \dot{Z}_{\rm even} \times \R. 
\]
Then $D_{(\delta V,u,v_0)}\,\mathcal{F}(\Lambda_0)$ is an isomorphism since $\partial_s:X_{\rm odd} \to \dot{Z}_{\rm even}$ is an isomorphism. Hence the implicit function theorem yields the existence of the open sets $U$, $W$,
and the unique smooth map $\varphi:U\to W$ satisfying
\[
    \mathcal{F}(\Omega,v^\perp,w,\varphi(\Omega,v^\perp,w))
    =
    (0,0,0),
    \quad
    (\Omega,v^\perp,w)\in U. 
\]

To compute the Fr\'{e}chet derivative of $\varphi$, we differentiate the identity 
\[
    \mathcal{F}
    (\Omega,v^\perp,w,\varphi(\Omega,v^\perp,w)) 
    = (0,0,0) 
\]
with respect to $(v^\perp,w)$ to obtain
\[
    D_{(v^\perp,w)}\,\mathcal{F}(\Lambda_0)
    + D_{(\delta V,u,v_0)}\,\mathcal{F}(\Lambda_0) 
    \circ D_{(v^\perp,w)}\,\varphi(\Omega,0,0)
    = 0, 
\]
and equivalently, 
\[
    D_{(v^\perp,w)}\,\varphi(\Omega,0,0)
    = 
    -\big(
    D_{(\delta V,u,v_0)}\,\mathcal{F}(\Lambda_0)
    \big)^{-1} 
    \circ
    D_{(v^\perp,w)}\,\mathcal{F}(\Lambda_0).
\]
Hence the assertion follows from 
\[
    \big(
    D_{(\delta V,u,v_0)}\,\mathcal{F}(\Lambda_0)
    \big)^{-1}
    =
    \begin{pmatrix}
    1 & 0 & 0 \\
    0 & \partial_s^{-1} & 0 \\
    0 & 0 & -R
    \end{pmatrix}
    : \R \times \dot{Z}_{\rm even} \times \R
    \to
    \R \times X_{\rm odd} \times \R
\]
and 
\[
    D_{(v^\perp,w)}\,\mathcal{F}(\Lambda_0)
    \begin{pmatrix}
    h_{v^\perp} \\
    h_w
    \end{pmatrix} 
    = 
    \begin{pmatrix} 
    0 \\ 
    -(1/R)h_{v^\perp} \\ 
    0 
    \end{pmatrix}. 
\]
This completes the proof. 
\end{proof}

Using $U,\varphi$ in Proposition \ref{prop.IFT}, we define the reduced operator
\begin{equation}\label{def.G1}
    \mathcal{G} = (\mathcal{G}_1, \mathcal{G}_2):
    U
    \to 
    \dot{Y}_{\rm even} \times Y_{\rm odd}
\end{equation}
by
\begin{equation}\label{def.G2}
\begin{split}
    \mathcal{G}_1(\Omega,v^\perp,w) 
    &= 
    (\mathcal{N} - \overbar{\mathcal{N}}) 
    (\Omega,v^\perp,w,\varphi(\Omega,v^\perp,w)), \\
    \mathcal{G}_2(\Omega,v^\perp,w) 
    &= 
    \mathcal{B}(\Omega,v^\perp,w,\varphi(\Omega,v^\perp,w)).   
\end{split}
\end{equation}

    \subsection{The Linearized Operator}
    \label{sec.lin.op}

For fixed $\Omega$, we define the linearized operator $\mathcal{L}$ on $\dot{Y}_{\rm even} \times Y_{\rm odd}$ by 
\[
    \mathcal{L}
    := D_{(v^\perp,w)}\,\mathcal{G}(\Omega,0,0)\big|_{\dot{X}_{\rm even} \times X_{\rm odd}}, 
    \qquad
    D(\mathcal{L})
    = \dot{X}_{\rm even} \times X_{\rm odd}.
\]
We equip $\dot Y_{\rm even}\times Y_{\rm odd}$ with the standard $H^2$ inner product
\[
    \bigg\langle
    \begin{pmatrix}
    f_1 \\
    f_2
    \end{pmatrix}
    , 
    \begin{pmatrix}
    g_1 \\
    g_2
    \end{pmatrix}
    \bigg\rangle_{H^2}
    :=
    \sum_{m=1}^2\sum_{j=0}^2
    \int_{\T}
    \partial_s^j f_m\,\partial_s^j g_m
    \dd s.
\]
We will also use the notation $\mathcal{L}_{\Omega}$, instead of $\mathcal{L}$, to denote the linearized operator when we want to emphasize the value of $\Omega$. The explicit form of $\mathcal{L}$ is obtained as follows: we expand $\mathcal{N}$ and $\mathcal{B}$ in \eqref{def.TNB} to the first order. By \eqref{eq.prop.IFT}, we have $u_s=(1/R)v^\perp$ and $\delta V=v_0=0$ to the first order. Substituting these relations into $\mathcal{N}$ and $\mathcal{B}$, and noting that $\overbar{v^\perp}=0$, we obtain 
\begin{equation}\label{def.L}
    \mathcal{L}
    \begin{pmatrix}
    v^\perp \\
    w
    \end{pmatrix}
    =
    \begin{pmatrix}
    -v^\perp_{ss} - (1/R^2) v^\perp - R\Omega w_s \\[3pt]
    -w_{ss} + R\Omega v^\perp_s
    \end{pmatrix}.
\end{equation}

Let us compute the Fourier representation of $\mathcal{L}=\mathcal{L}_\Omega$. If
\[
    v^\perp(s) 
    = \sum_{l\ge1} 
    \hat{v}^\perp_l \cos\Big(\frac{ls}{R}\Big),
    \qquad
    w(s) 
    = \sum_{l\ge1}
    \hat{w}_l \sin\Big(\frac{ls}{R}\Big),
\]
then
\[
    \mathcal{L}_\Omega
    \begin{pmatrix}
    v^\perp \\
    w
    \end{pmatrix}
    =
    \sum_{l\ge1}
    \begin{pmatrix}
    \big(
    ((l^2-1)/R^2)\hat{v}^\perp_l - l\Omega \hat{w}_l
    \big)
    \cos(ls/R) \\[3pt]
    \big(
    (l^2/R^2)\hat{w}_l - l\Omega \hat{v}^\perp_l
    \big)
    \sin(ls/R)
    \end{pmatrix}.
\]
Thus, on each Fourier mode, $\mathcal{L}_\Omega$ acts through the matrix
\[
    M_l(\Omega)
    :=
    \begin{pmatrix}
    (l^2-1)/R^2 & -l\Omega \\[3pt]
    -l\Omega & l^2/R^2
    \end{pmatrix},
    \quad
    l \ge 1.
\]
Then it is routine to check that $\mathcal{L}$ is a self-adjoint operator on $\dot{Y}_{\rm even} \times Y_{\rm odd}$.

To find the non-trivial solutions of $\mathcal{L}\,{}^{t}(v^\perp\,w)=0$, we fix $k\ge1$ and consider 
\[
    \begin{pmatrix}
    v^\perp \\
    w
    \end{pmatrix}
    =
    \begin{pmatrix}
    \hat{v}^\perp_k \cos(k s/R) \\[3pt]
    \hat{w}_k \sin(k s/R)
    \end{pmatrix}. 
\]
After the substitution, we obtain 
\[
    M_{k}(\Omega)
    \begin{pmatrix}
    \hat{v}^\perp_k \\
    \hat{w}_k
    \end{pmatrix}
    =
    \begin{pmatrix}
    (k^2-1)/R^2 & -k\Omega \\[3pt]
    -k\Omega & k^2/R^2 \\
    \end{pmatrix}
    \begin{pmatrix}
    \hat{v}^\perp_k \\
    \hat{w}_k
    \end{pmatrix}
    =
    \begin{pmatrix}
    0 \\
    0
    \end{pmatrix}.
\]
A direct computation shows that
\[
    \det M_{k}(\Omega)
    = 
    k^2
    \Big(
    \frac{k^2-1}{R^4} - \Omega^2
    \Big), 
\]
from which we derive the bifurcation condition 
\[
    \Omega 
    = \pm \frac{\sqrt{k^2-1}}{R^2}.
\]
In the following, we focus on the non-negative branch $\sqrt{k^2-1}/R^2$,
which we denote by $\Omega_k$ consistently with \eqref{def.Omegak}.
 Another direct computation shows that, when $\Omega=\Omega_k$, 
\[
    \det M_l(\Omega_k)
    = 
    \frac{l^2(l^2-k^2)}{R^4}, 
    \quad
    l \ge 1,
\]
and that the null space $\opN(M_k(\Omega_k))$ is given by
\[
    \opN(M_k(\Omega_k))
    =
    \opspan
    \bigg\{
    \begin{pmatrix}
    k \\[3pt]
    \sqrt{k^2-1}
    \end{pmatrix} 
    \bigg\}. 
\]
Hence the non-trivial solutions when $\Omega=\Omega_{k}$ are the elements belonging to 
\begin{equation}\label{def.Phi}
    \opN(\mathcal{L}_{\Omega_k})
    = 
    \opspan\{\Phi_k\},
    \qquad
    \Phi_k
    = 
    \begin{pmatrix}
    (\Phi_k)_1 \\[3pt]
    (\Phi_k)_2
    \end{pmatrix} 
    = 
    \begin{pmatrix}
    k \cos(k s/R) \\[3pt]
    \sqrt{k^2-1} \sin(k s/R)
    \end{pmatrix}. 
\end{equation}

    \subsection{End of the Proof}
    \label{sec.end}

To apply the bifurcation theory by Crandall and Rabinowitz, we need the following lemma.

\begin{lemma}\label{lem.CRT}
The following hold.

\begin{enumerate}[(1)]
\item\label{item1.lem.CRT}
$\mathcal{L}_{\Omega_k}$ is a Fredholm operator and there exists $\Phi_k$ such that $\opN(\mathcal{L}_{\Omega_k}) = \opspan \{\Phi_k\}$.

\item\label{item2.lem.CRT}
$\opcodim\opR(\mathcal{L}_{\Omega_k})=1$, where
$\opR(\mathcal{L}_{\Omega_k})$ is the range of $\mathcal{L}_{\Omega_k}$.

\item\label{item3.lem.CRT}
Let $k\in\N$ satisfy $ k\ge2$. Then 
$
\mathcal{L}'_{\Omega_k} \Phi_k 
\notin \opR(\mathcal{L}_{\Omega_k})
$ where 
$
\mathcal{L}'_{\Omega_k} 
:= \partial_\Omega \mathcal{L}_\Omega|_{\Omega=\Omega_k}
$. 
\end{enumerate}
\end{lemma}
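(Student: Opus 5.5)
The plan is to diagonalize $\mathcal{L}_{\Omega_k}$ in the Fourier basis $\{\cos(ls/R)\}_{l\ge1}\times\{\sin(ls/R)\}_{l\ge1}$. On this basis $\mathcal{L}_{\Omega_k}$ acts mode by mode through the symmetric matrices $M_l(\Omega_k)$. Note that $\mathcal{L}_{\Omega}$ is a bounded map $\dot X_{\rm even}\times X_{\rm odd}\to \dot Y_{\rm even}\times Y_{\rm odd}$, and the principal part $-\partial_s^2$ is an isomorphism between these spaces; recall that the mean-zero condition is built into $\dot X_{\rm even}$, and that odd functions have zero mean automatically. The remaining terms $-(1/R^2)v^\perp-R\Omega w_s$ and $R\Omega v^\perp_s$ are of order at most one. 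They therefore define compact operators $H^4\to H^2$ by Rellich's theorem on $\T$. Hence $\mathcal{L}_{\Omega_k}$ is Fredholm of index zero.

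The kernel was already computed in \eqref{def.Phi}: $\det M_l(\Omega_k)=l^2(l^2-k^2)/R^4$ vanishes only at $l=k$, and there the null space is spanned by ${}^t(k,\sqrt{k^2-1})$. This gives \eqref{item1.lem.CRT}. Index zero then yields $\opcodim\opR(\mathcal{L}_{\Omega_k})=1$.

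For a more concrete description of the range, I would show directly that
\[
\opR(\mathcal{L}_{\Omega_k})=\{f\in \dot Y_{\rm even}\times Y_{\rm odd} ~|~ \text{the $k$-th Fourier coefficient vector of } f \text{ is orthogonal in } \R^2 \text{ to } {}^t(k,\sqrt{k^2-1})\}.
\]
For $l\neq k$ one inverts $M_l(\Omega_k)$. Since $\|M_l(\Omega_k)^{-1}\|\lesssim R^2/l^2$ for large $l$, the preimage gains two derivatives and so lies in $H^4$. At $l=k$, the symmetric matrix $M_k(\Omega_k)$ has range equal to the orthogonal complement of its kernel. This gives \eqref{item2.lem.CRT} explicitly.

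For \eqref{item3.lem.CRT}, differentiating \eqref{def.L} in $\Omega$ gives
\[
\mathcal{L}'_{\Omega_k}\Phi_k = {}^t\big(-R(\Phi_k)_{2,s},\, R(\Phi_k)_{1,s}\big) = {}^t\big(-R\sqrt{k^2-1}\,(k/R)\cos(ks/R),\, -Rk(k/R)\sin(ks/R)\big).
\]
Its $k$-th coefficient vector is $-k\,{}^t(\sqrt{k^2-1},k)$. Pairing this with ${}^t(k,\sqrt{k^2-1})$ gives $-2k^2\sqrt{k^2-1}$, which is nonzero precisely when $k\ge2$. Equivalently, $\partial_\Omega M_k = \begin{pmatrix}0&-k\\-k&0\end{pmatrix}$ gives the transversality quantity $\langle \partial_\Omega M_k\,\xi,\xi\rangle=-2k\xi_1\xi_2\neq0$ for $\xi={}^t(k,\sqrt{k^2-1})$. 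By the range characterization, $\mathcal{L}'_{\Omega_k}\Phi_k\notin\opR(\mathcal{L}_{\Omega_k})$.

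The only point requiring care is that the orthogonality in the range characterization should be phrased with respect to the Euclidean pairing of Fourier coefficients, or equivalently the $L^2$ pairing, rather than the $H^2$ inner product. On a single mode these differ only by the positive factor $(1+k^2/R^2+k^4/R^4)\pi R$, so nothing changes. The main obstacle, if any, is justifying the Fredholm and elliptic-regularity step cleanly. The mode-wise inverse bound handles this directly, so I expect no serious difficulty.
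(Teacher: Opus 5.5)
Your proof is correct. Its computations agree with the paper's: the same mode matrices $M_l(\Omega_k)$ and the same kernel vector. Your pairing $-2k^2\sqrt{k^2-1}$ is the paper's $H^2$ pairing $-2\pi R\,k^2\sqrt{k^2-1}\,(1+k^2/R^2+k^4/R^4)$ divided by the positive mode weight $\pi R(1+k^2/R^2+k^4/R^4)$.

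Where you differ is in how you control the range. The paper asserts Fredholmness by appeal to ellipticity, without detail. It declares $\mathcal{L}_{\Omega_k}$ self-adjoint on $\dot Y_{\rm even}\times Y_{\rm odd}$ with the $H^2$ inner product and deduces $\opR(\mathcal{L}_{\Omega_k})=\opN(\mathcal{L}_{\Omega_k})^\perp$. It then checks transversality by testing $\mathcal{L}'_{\Omega_k}\Phi_k$ against $\Phi_k$ in $H^2$.

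You never need self-adjointness of the unbounded operator. You get Fredholm index zero by writing $\mathcal{L}_{\Omega_k}$ as the isomorphism $-\partial_s^2$ plus a compact first-order perturbation, and you read off $\opcodim\opR(\mathcal{L}_{\Omega_k})=1$ from the index. You then describe the range explicitly by inverting $M_l(\Omega_k)$ for $l\neq k$, using the $R^2/l^2$ bound to recover $H^4$ regularity. Symmetry is thus used only for the single $2\times 2$ matrix $M_k(\Omega_k)$.

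The paper's argument is shorter. Yours is more self-contained and actually proves the Fredholm property the paper only cites. It also makes clear that the choice of inner product ($L^2$ versus $H^2$) in the orthogonality condition is immaterial.
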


\begin{proof}
For (\ref{item1.lem.CRT}), the first claim follows from the ellipticity of $\mathcal{L}_{\Omega_k}$, and the second one from the results in Section \ref{sec.lin.op}. For (\ref{item2.lem.CRT}), since $\mathcal{L}_{\Omega_k}$ is self-adjoint on the Hilbert space $\dot Y_{\rm even}\times Y_{\rm odd}$ endowed with the $H^2$ inner product and is a Fredholm operator, its range is closed and
\begin{equation}\label{eq.lem.CRT}
    \opR(\mathcal{L}_{\Omega_k})
    = \opN(\mathcal{L}_{\Omega_k})^\perp.
\end{equation}
Thus the claim follows from \eqref{def.Phi}. For (\ref{item3.lem.CRT}), thanks to \eqref{eq.lem.CRT}, the claim is equivalent to
\[
    \langle
    \mathcal{L}'_{\Omega_k} \Phi_k, \Phi_k
    \rangle_{H^2}
    \neq
    0.
\]
In view of the formula derived from \eqref{def.L},
\[
    \mathcal{L}'_{\Omega_k}
    \begin{pmatrix}
    v^\perp \\
    w
    \end{pmatrix}
    =
    \begin{pmatrix}
    - R w_s \\
    R v^\perp_s
    \end{pmatrix}
\]
and we have
\[
    \mathcal L'_{\Omega_k}\Phi_k
    =
    \begin{pmatrix}
    -k\sqrt{k^2-1}\cos(ks/R) \\[3pt]
    -k^2\sin(ks/R)
    \end{pmatrix}.
\]
Therefore,
\[
\begin{split}
    &\langle
    \mathcal{L}'_{\Omega_k} \Phi_k, \Phi_k
    \rangle_{H^2} \\
    &=
    \sum_{j=0}^2
    \int_{0}^{2\pi R}
    \partial_s^j
    \bigg(
    -k\sqrt{k^2-1}\cos\Big(\frac{ks}{R}\Big)
    \bigg)
    \partial_s^j
    \bigg(
    k\cos\Big(\frac{ks}{R}\Big)
    \bigg)
    \dd s \\
    &\quad
    +
    \sum_{j=0}^2
    \int_{0}^{2\pi R}
    \partial_s^j
    \bigg(
    -k^2\sin\Big(\frac{ks}{R}\Big)
    \bigg)
    \partial_s^j
    \bigg(
    \sqrt{k^2-1}\sin\Big(\frac{ks}{R}\Big)
    \bigg)
    \dd s \\
    &=
    -2\pi R\,k^2\sqrt{k^2-1}
    \Big(
    1 + \frac{k^2}{R^2} + \frac{k^4}{R^4}
    \Big).
\end{split}
\]
Then the right-hand side is nonzero due to $k\ge2$. Thus the claim follows.
\end{proof}

Set
\[
    G(\mu,v^\perp,w)
    = 
    \mathcal{G}(\Omega_k+\mu,v^\perp,w),
\]
where $\mathcal{G}$ is defined in \eqref{def.G2}. Then we see from Lemma \ref{lem.CRT} that $G$ satisfies the hypotheses of \cite[Theorem 1.7]{CrandallRabinowitz1971}. Thus there exist $\eps_{k}>0$ and a continuously differentiable curve 
\[
    (-\eps_{k}, \eps_{k}) 
    \ni 
    \lambda 
    \mapsto 
    (\Omega^\lambda, v^{\perp,\lambda}, w^\lambda) 
    \in 
    \R \times \dot{X}_{\rm even} \times X_{\rm odd}
\]
with 
$
(\Omega^0, v^{\perp,0}, w^0) = (\Omega_k, 0, 0)
$ 
such that 
$
\mathcal{G}(\Omega^\lambda, v^{\perp,\lambda}, w^\lambda) = (0,0)
$
for all $\lambda \in (-\eps_{k}, \eps_{k})$. Since $\mathcal{G}$ is smooth, \cite[Theorem 1.18]{CrandallRabinowitz1971} yields that this curve is in fact smooth in $\lambda$.

The variables $\Omega^\lambda,v^{\perp,\lambda},w^\lambda$ are expressed using $\Phi_k$ defined in \eqref{def.Phi} as follows:
\begin{equation}\label{est1.prf.thm.main}
    \Omega^\lambda
    = 
    \Omega_k 
    + \tilde{\Omega}^{\lambda},
    \qquad 
    v^{\perp,\lambda}
    = 
    \lambda (\Phi_k)_1 
    + \tilde{v}^{\perp,\lambda}, 
    \qquad
    w^\lambda
    = 
    \lambda (\Phi_k)_2 
    + \tilde{w}^{\lambda}. 
\end{equation}
Here the remainders $\tilde{\Omega}^{\lambda},\tilde{v}^{\perp,\lambda},\tilde{w}^{\lambda}$ satisfy, when $\lambda \to 0$, 
\begin{equation}\label{est2.prf.thm.main}
    |\tilde{\Omega}^\lambda| 
    = O(|\lambda|),
    \qquad
    \|\tilde{v}^{\perp,\lambda}\|_{4} 
    = o(|\lambda|),
    \qquad
    \|\tilde{w}^{\lambda}\|_{4} 
    = o(|\lambda|).
\end{equation}
Recall from Proposition \ref{prop.IFT} that the variables $\delta V^\lambda,u^\lambda,v^{\lambda}_0$ are determined by $\varphi$ as
\[
    (\delta V^\lambda, u^\lambda, v^{\lambda}_0)
    =
    \varphi(\Omega^\lambda, v^{\perp,\lambda}, w^\lambda)
    \in 
    \R \times X_{\rm odd} \times \R.
\]
The smoothness of $\varphi$ combined with \eqref{eq.prop.IFT} and \eqref{est1.prf.thm.main} shows that, when $\lambda \to 0$, 
\begin{equation}\label{est3.prf.thm.main}
    u^\lambda(s)
    = 
    \lambda \sin\Big(\frac{ks}{R}\Big) 
    + \tilde{u}^\lambda(s), 
    \qquad
    \delta V^\lambda 
    = 
    \del \tilde{V}^\lambda, 
    \qquad
    v_0^\lambda 
    = 
    \tilde{v}_0^\lambda, 
\end{equation}
where the remainders 
$\tilde{u}^{\lambda},\del \tilde{V}^\lambda,\tilde{v}_0^\lambda$ 
satisfy 
\begin{equation}\label{est4.prf.thm.main}
    \|\tilde{u}^\lambda\|_{4} 
    = o(|\lambda|),    
    \qquad
    |\del \tilde{V}^\lambda| 
    = O(\lambda^2),
    \qquad
    |\tilde{v}_0^\lambda| 
    = O(\lambda^2).
\end{equation}
By construction, the obtained pair 
\[
    \Omega^\lambda,
    \quad
    \del V^\lambda,
    \quad
    u^\lambda,
    \quad
    v^\lambda:= v^\lambda_0 + v^{\perp,\lambda}, 
    \quad
    w^\lambda   
\]
solves the nonlinear system \eqref{eq.NBC}, that is, $\mathcal{N}=\mathcal B=\mathcal{C}=0$. It is straightforward to check that this pair satisfies $\mathcal{T}=0$ by using \eqref{id.TNBC} and the Sobolev embedding $H^2(\T)\hookrightarrow C^1(\T)$ and by taking $\eps_{k}$ smaller if necessary so that $1+u^{\lambda}_s-v^{\lambda}/R\ge 1/2$ holds.

Now we construct $\bv{y}^\lambda \in H^4(\T)^3$ by substituting the variables back into \eqref{ansatz.z-delV} and \eqref{eq.z}: 
\begin{equation}\label{est5.prf.thm.main}
    \bv{y}^\lambda
    = 
    \bv{x}^R_0
    + u^\lambda \bv{t}
    + (v^{\lambda}_0 + v^{\perp,\lambda}) \bv{n}
    + w^\lambda \bv{b}. 
\end{equation}
Then the corresponding axial screw motion $\bv{x}^{(\lambda)}$ is obtained from \eqref{ansatz.screw} as 
\begin{equation}\label{est6.prf.thm.main}
    \bv{x}^{(\lambda)}(s,t)
    =
    Q^{\Omega^\lambda t}_z \bv{y}^\lambda(s + c^\lambda t)
    + \Big(\frac1R + \del V^\lambda \Big) t \bv{e}_3,
    \quad
    (s,t)\in\T\times[0,\infty), 
\end{equation}
with the constant $c^\lambda$ defined by 
\begin{equation}\label{est7.prf.thm.main}
    c^\lambda
    = 
    - \bv{y}^\lambda_s 
    \cdot 
    \bigg(
    \Omega^\lambda \bv{e}_3 \times \bv{y}^\lambda 
    + \Big(\frac1R + \del V^\lambda \Big) \bv{e}_3
    \bigg). 
\end{equation}
Recall that $c^\lambda$ actually does not depend on $s\in\T$ by Proposition \ref{prop.eq.unified} (\ref{item2.prop.eq.unified}). Moreover, since 
\[
    (\delta V^0,u^0,v_0^0,v^{\perp,0},w^0)
    = (0,0,0,0,0), 
\]
we see from \eqref{est5.prf.thm.main} that $\bv{y}^0=\bv{x}^R_0$. In addition, \eqref{est7.prf.thm.main} gives 
\[
    c^0
    =
    - \bv{x}^R_{0s}\cdot
    \Big(
    \Omega_k \bv{e}_3\times \bv{x}^R_0
    + \frac1R \bv{e}_3
    \Big)
    =
    -R\Omega_k, 
\]
which leads to $\bv{x}^{(0)}=\bv{x}^R$.

In the following, we show the properties (\ref{item1.thm.main}) and (\ref{item2.thm.main}). By Lemma \ref{lem.screw.dist} and $\bv{x}^R_0\in\Sigma$, 
\[
    \opdist(\bv{x}^{(\lambda)}(t),\Sigma)
    =
    \opdist(\bv{y}^\lambda,\Sigma)
    \le
    \|\bv{y}^\lambda - \bv{x}^R_0\|_{2},
    \quad
    t\ge0.
\]
Hence the property (\ref{item1.thm.main}) follows from \eqref{est5.prf.thm.main} combined with \eqref{est1.prf.thm.main}--\eqref{est4.prf.thm.main}.

Next we show (\ref{item2.thm.main}). For sufficiently small $\lambda\neq0$, we have by \eqref{def.F}, 
\[
    \delta V^\lambda \Big(1 - \frac{1}{R} v_0^\lambda\Big) 
    = 
    - \Omega^\lambda \overbar{v^{\perp,\lambda}w^\lambda_s}.
\]
Using \eqref{est1.prf.thm.main}--\eqref{est4.prf.thm.main}, 
\[
\begin{split}
    \overbar{v^{\perp,\lambda}w^\lambda_s}
    = 
    \lambda^2 
    \overbar{(\Phi_k)_1 \partial_s(\Phi_k)_2} 
    + o(\lambda^2). 
\end{split}
\]
Then, by the definition of $(\Phi_k)_1,(\Phi_k)_2$ in \eqref{def.Phi}, 
\[
\begin{split} 
    \overbar{(\Phi_k)_1 \partial_s(\Phi_k)_2} 
    &= 
    \frac{1}{2\pi R} 
    \int_{0}^{2\pi R}
    k \cos\Big(\frac{ks}{R}\Big) \frac{k}{R} \sqrt{k^2-1} \cos\Big(\frac{ks}{R}\Big)
    \dd s \\
    &= 
    \frac{k^2 \sqrt{k^2-1}}{2R}. 
\end{split}
\]
Substituting this back and noting $\Omega_k = \sqrt{k^2-1}/R^2$ gives, when $\lambda \to 0$, 
\[
    \delta V^\lambda 
    = 
    -  \left(\frac{k^2 (k^2-1)}{2R^3} \right)\lambda^2
    + o(\lambda^2).
\]
Hence we see from
\[
    \bv{x}^{(\lambda)}(s,t) - \bv{x}^R(s,t)
    =
    \del V^\lambda t \bv{e}_3 
    +Q^{\Omega^\lambda t}_z \bv{y}^\lambda(s + c^\lambda t)
    - \bv{x}^R_0(s)
\]
that, with a suitable choice of $\gamma,t_0>0$ and possibly after taking $\eps_{k}$ smaller,
\[
    |\bv{x}^{(\lambda)}(s,t) - \bv{x}^R(s,t)|
    \ge
    |\del V^\lambda |t 
    - 
    |Q^{\Omega^\lambda t}_z \bv{y}^\lambda(s + c^\lambda t) \cdot \bv{e}_3|
    \ge
    \gamma t,
\]
for any $(s,t)\in \T\times[t_0,\infty)$. This completes the proof of Theorem \ref{thm.main}. 
\hfill\BOX

\end{document}